\documentclass[12pt]{amsart}
\usepackage{amsmath}
\usepackage{amssymb}
\usepackage{amsrefs}
\usepackage{a4wide}
\usepackage{amscd} 
\usepackage{graphics}
\usepackage{color}
\usepackage{amsthm} 
\theoremstyle{plain}
\newtheorem{theorem}{Theorem}[section]
\newtheorem{lemma}[theorem]{Lemma}
\newtheorem{corollary}[theorem]{Corollary}
\newtheorem{proposition}[theorem]{Proposition}
\theoremstyle{definition}
\newtheorem{definition}[theorem]{Definition}

\theoremstyle{remark}
\newtheorem{remark}[theorem]{Remark}

\newcommand{\Ker}{\operatorname{Ker}}
\newcommand{\Rej}{\operatorname{Rej}}
\newcommand{\Rad}{\operatorname{Rad}}
\newcommand{\Jac}[1]{\mathrm{Jac}(#1)}
\newcommand{\Soc}{\operatorname{Soc}}

\newcommand{\NN}{\mathbb{N}}

\begin{document}
\title{When $\delta$-semiperfect rings are semiperfect}
\date{\today}
\author{ENG{\.I}N B\"uy\"uka\c{s}{\i}k}\address{Izmir Institute of Technology, Department of Mathematics, 35430, Urla, Izmir, Turkey}\email{enginbuyukasik@iyte.edu.tr}
\author{Christian Lomp}
\address{Departamento de Matem\'atica Pura da Faculdade de Ci\^encias da Universidade do Porto, R.Campo Alegre 687, 4169-007 Porto, Portugal}
\email{clomp@fc.up.pt}

\begin{abstract}Zhou defined $\delta$-semiperfect rings as a proper generalization of semiperfect rings. The purpose of this paper is to discuss relative notions of supplemented modules and to show that the semiperfect rings are precisely the semilocal rings which are $\delta$-supplemented. Module theoretic version of our results are obtained.
\end{abstract}
\subjclass{16D10, 16D40, 16D70}

\keywords{$\delta$-semiperfect, $\delta$-supplemented.}

\maketitle

 \renewcommand{\theenumi}{\arabic{enumi}}
\renewcommand{\labelenumi}{\emph{(\theenumi)}}

\section{Introduction}
H.\ Bass characterized in \cite{Bass} those rings $R$ whose left $R$-modules have projective covers and termed them {\it left perfect rings}. He characterized them as those semilocal rings which have a left $t$-nilpotent Jacobson radical $\Jac{R}$. Bass's {\it semiperfect rings} are those whose finitely generated left (or right) $R$-modules have projective covers. Kasch and Mares transferred in \cite{KaschMares} the notions of perfect and semiperfect rings to modules and characterized semiperfect modules by a lattice-theoretical condition as follows. A module $M$ is called {\it supplemented} if for any submodule $N$ of $M$ there exists a submodule $L$ of $M$ minimal with respect to $M=N+L$. The left perfect rings are then shown to be exactly those rings whose left $R$-modules are supplemented while the semiperfect rings are those whose finitely generated left $R$-modules are supplemented. Equivalently it is enough for a ring $R$ to be semiperfect if the left (or right) $R$-module $R$ is supplemented. Recall that a submodule $N\leq M$ is called {\it small}, denoted by $N\ll M$, if $N+L\neq M$ for all proper submodules $L$ of $M$, and that $L \leq M$, is said to be \emph{essential} in $M$, denoted by $L \unlhd M$, if $L \cap K \neq 0$ for each nonzero submodule $K \leq M$. A module $M$ is said to be \emph{singular} if $M \cong N/L$ for some module $N$ and a submodule $L\leq N$ with $L \unlhd N$.

In \cite{Zhou}, Zhou called a ring $R$ \emph{$\delta$-semiperfect} if every finitely generated $R$-module $M$ has a projective $\delta$-cover $P$, i.e. $P$ is a projective left $R$-module with a projection $p:P\rightarrow M$ onto $M$ such that the kernel $\Ker (p)$ is $\delta$-small in $P$, where a submodule $X\leq Y$ is said to be \emph{$\delta$-small} in $Y$ (denoted by $X \ll_{\delta} Y$) if $X+Z\neq Y$ for all proper $Z < Y$ with $Y/Z$ singular. It is known that ring $R$ is $\delta$-semiperfect if and only if it is a $\delta$-supplemented module. Here a module $M$ is called $\delta$-supplemented if every submodule $L\leq M$ has a $\delta$-supplement $N$ in $M$, i.e. $M=N+L$ and $N\cap L \ll_\delta N$. For further properties of $\delta$-semiperfect rings and $\delta$-supplemented modules we refer to \cite{Kosan} and \cite{Zhou}.

Zhou proved that $\delta$-semiperfect rings properly contains semiperfect rings (see, \cite{Zhou}*{Example 4.1}). An easy example of a ring that is $\delta$-semiperfect, but not semilocal had been given by Zhou in \cite{Zhou} as follows: Let $F$ be the field of two elements and $A=F^\NN$ the (commutative) ring of sequences over $F$, whose operation are pointwise multiplication and pointwise addition. Note that the unit element $1_A$ of $A$ is the sequence which is constant $1$. Let $R \subseteq A$ be the subring generated by $1_A$ and all sequences that have only a finite number of entries non-zero. Then $\Soc(R)$ consists of all sequences that have only a finite number of entries non-zero and $R/Soc(R)$ is the only singular simple $R$-module. Moreover $R/Soc(R) \simeq F$ is a field, i.e. $Soc(R)$ is an essential maximal ideal of $R$ and $R$ is $\delta$-local (see below), hence $\delta$-semiperfect. On the other hand, since $A$ is von Neumann regular, $R$ is von Neumann regular, i.e. $\Jac{R}=0$ and $R$ is not semilocal.

The purpose of this paper is to discuss the gap between supplemented and $\delta$-supplemented modules and our main result is that an arbitrary associative unital ring $R$ is semiperfect if and only if it is semilocal and $\delta$-semiperfect. We characterize finitely generated $\delta$-supplemented modules $M$ as those which are sums of simple and $\delta$-local modules or equivalently which satisfy the property that every maximal submodule of $M$ has a $\delta$-supplement. The notion of a $\delta$-coclosed submodule is defined and it is shown that a submodule is a $\delta$-supplement if and only if it is $\delta$-coclosed and a weak
$\delta$-supplement.

\section{$\delta$-supplements}

In this section we show that some of the technicalities on supplement submodules have their relative equivalent. Let $\mathcal{P}$ be the class of all singular simple $R$-modules. For a module $M$, as in \cite{Zhou}, let $$\delta (M)=\Rej (\mathcal{P})=\bigcap \{N\leq M \mid M/N \in \mathcal{P}\}=\sum \{N \leq M \mid N \ll _{\delta} M\}.$$ Let $S$ be a nonsingular simple module, then it is easy to see that $\delta(S)=S$. Also note that if $K$ is a maximal submodule which is essential in $M$, then $M/K$ is singular, so that $\delta (M) \leq K$.

We have the following basic Lemma:

\begin{lemma}[{\cite{Zhou}*{Lemma 1.2}}]\label{Lemma:Zhou-equivalent conditions for delta-small modules}
A submodule $N\leq M$ is $\delta$-small if and only if for all submodules $X \leq M$:
$$\mbox{ if } X+N=M, \mbox{ then } M=X\oplus Y \mbox{ for a projective semisimple submodule $Y$ with } Y \leq N.$$
\end{lemma}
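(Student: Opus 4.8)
I would prove the two implications separately. First, for the direction from the splitting property to $\delta$-smallness: suppose $X+N=M$ with $M/X$ singular; the hypothesis yields a decomposition $M=X\oplus Y$ with $Y\leq N$ projective and semisimple, so $M/X\cong Y$. A projective simple module cannot be singular, since splitting the defining epimorphism of a singular presentation produces a nonzero submodule meeting the essential kernel trivially; hence $Y$, a direct sum of projective simple modules, is nonsingular. But $Y\cong M/X$ is singular, so $Y=0$ and $X=M$, which is exactly the defining condition of $N\ll_\delta M$.

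For the converse, assume $N\ll_\delta M$ and let $X\leq M$ with $X+N=M$. Using Zorn's lemma, I would choose $Y\leq N$ maximal with respect to $X\cap Y=0$ and claim this $Y$ is the required summand. The first step is a maximality argument showing $Y+(X\cap N)$ is essential in $N$: given a nonzero $N'\leq N$ with $N'\cap Y=0$, maximality forces $(Y\oplus N')\cap X\neq 0$, and for a nonzero $x=y+n'$ in that intersection one has $x=n'+y\in N$, hence $x\in X\cap N$ and $0\neq n'=x-y\in N'\cap(Y+(X\cap N))$. Next, since $M=(X\oplus Y)+N$ and $Y\subseteq N$, the isomorphism theorems and the modular law give $M/(X\oplus Y)\cong N/(Y+(X\cap N))$, which is singular because $Y+(X\cap N)\unlhd N$. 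As $X\subseteq X\oplus Y$ we have $(X\oplus Y)+N=M$, so $\delta$-smallness of $N$ forces $X\oplus Y=M$.

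It remains to see $Y$ is projective semisimple. A submodule of a $\delta$-small submodule is again $\delta$-small, so $Y\ll_\delta M$. For semisimplicity I would show every $Z\leq Y$ splits off: choosing $Z^{c}\leq Y$ maximal with $Z\cap Z^{c}=0$ makes $Z\oplus Z^{c}$ essential in $Y$ by the standard complement argument, so $M/(X\oplus Z\oplus Z^{c})\cong Y/(Z\oplus Z^{c})$ is singular; this submodule still sums with $Y$ to $M$, so $\delta$-smallness gives $X\oplus Z\oplus Z^{c}=M$, and intersecting with $Y$ yields $Y=Z\oplus Z^{c}$. Finally, writing $Y=\bigoplus_{i}S_{i}$ with the $S_{i}$ simple, each $S_{i}$ is then a direct summand of $M$ and is $\delta$-small in $M$, so $S_{i}$ is nonsingular (else $\delta$-smallness would force the complementary summand of $S_{i}$ in $M$ to be all of $M$); and a nonsingular simple module $R/I$ is projective because the maximal left ideal $I$ is then non-essential, whence $R=I\oplus J$ for some nonzero left ideal $J$ and $R/I\cong J$ is a direct summand of $R$.

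The technical heart, and where I expect to need the most care, is the forward direction: the definition of $\delta$-smallness only gives information about submodules with \emph{singular} quotient, so such submodules must be manufactured. The device that does this is that a complement $Y$ of $X$ makes $Y+(X\cap N)$ essential in $N$, so that $M/(X\oplus Y)$ is singular; the very same complement trick, run inside $Y$, is what forces $Y$ to be semisimple. The only genuinely external ingredient is the elementary fact that a simple module is projective precisely when it is nonsingular.
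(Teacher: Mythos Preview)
The paper does not give its own proof of this lemma: it is quoted verbatim from \cite{Zhou}*{Lemma 1.2} and used as a black box. So there is no in-paper argument to compare against.

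That said, your proof is correct and is essentially the standard argument one finds for this result. The backward implication is immediate once one knows that projective simple modules are nonsingular, and your justification of that fact is fine. For the forward implication, the key device---choosing a complement $Y$ of $X$ inside $N$ so that $Y+(X\cap N)\unlhd N$, hence $M/(X\oplus Y)\cong N/(Y+(X\cap N))$ is singular and $\delta$-smallness forces $M=X\oplus Y$---is exactly the intended mechanism, and your essentiality computation is clean. The semisimplicity of $Y$ via the same complement trick inside $Y$, and projectivity of each simple summand $S_i$ via ``$S_i$ is a $\delta$-small direct summand of $M$, hence nonsingular, hence projective'', are also correct. One cosmetic remark: when you intersect $X\oplus Z\oplus Z^{c}=M$ with $Y$ to conclude $Y=Z\oplus Z^{c}$, it is cleanest to phrase this as applying the projection $M=X\oplus Y\to Y$, rather than a bare ``intersecting with $Y$'', since the modular law alone does not immediately give the conclusion; the content is the same.
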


A submodule $N\leq M$ is said to be \emph{coclosed} if $N/K \ll M/K$ implies $K=N$ for each $K\leq N$. Every supplement submodule of
a module $M$ is coclosed. The notion of coclosed submodules is generalized as follows.

\begin{definition} Let $M$ be an $R$-module and $N \leq M$. We call $N$ a \emph{$\delta$-coclosed} submodule of $M$ if $N/X$ is singular and $N/X \ll_{\delta} M/X$ for some $X \leq N,$ then $X=N$.
\end{definition}

Supplements are coclosed and so are their $\delta$-equivalents:

\begin{lemma}\label{Lemma:Delta-supplement is delta-coclosed} Let $M$ be any module and $N \leq M$ be a
$\delta$-supplement in $M$. Then $N$ is $\delta$-coclosed.
\end{lemma}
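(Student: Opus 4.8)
The plan is to argue straight from the definitions rather than through a decomposition. Write $N$ as a $\delta$-supplement of some $L\le M$, so that $M=N+L$ and $N\cap L\ll_\delta N$. Fix $X\le N$ with $N/X$ singular and $N/X\ll_\delta M/X$; the goal is to show $X=N$.

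The first — and only non-automatic — step is to deduce $M=L+X$. Since $X\le N$ we have $M=N+L=N+(L+X)$, hence in $M/X$ one gets $N/X+(L+X)/X=M/X$. Moreover $(M/X)\big/\big((L+X)/X\big)\cong M/(L+X)\cong N/\bigl(N\cap(L+X)\bigr)$ is a homomorphic image of $N/X$, because $X\le N\cap(L+X)$; as homomorphic images of singular modules are singular, $(M/X)\big/\big((L+X)/X\big)$ is singular. Now I would feed the submodule $(L+X)/X$ of $M/X$ into the defining property of $N/X\ll_\delta M/X$: since its quotient in $M/X$ is singular and $N/X+(L+X)/X=M/X$, the submodule $(L+X)/X$ cannot be proper, i.e. $M=L+X$. (If one prefers, this can instead be obtained by applying Lemma~\ref{Lemma:Zhou-equivalent conditions for delta-small modules} to $N/X\ll_\delta M/X$, but the direct appeal to the definition is a touch shorter and avoids having to discuss the projective semisimple direct summand produced by that lemma.)

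From here everything is formal. Since $X\le N$, the modular law yields $N=N\cap M=N\cap(L+X)=(N\cap L)+X$. Suppose, for a contradiction, that $X$ is a proper submodule of $N$. Then $X<N$, the quotient $N/X$ is singular, and $(N\cap L)+X=N$ — which contradicts $N\cap L\ll_\delta N$. Hence $X=N$, so $N$ is $\delta$-coclosed. The whole argument is bookkeeping with the modular law and the definition of $\delta$-small; the one place to be a little careful is the verification that $M/(L+X)$ is singular, since that is exactly what makes $(L+X)/X$ an admissible test submodule for $N/X\ll_\delta M/X$.
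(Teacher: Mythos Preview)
Your proof is correct and follows essentially the same route as the paper's own argument: both pass to $M/X$, use that $M/(L+X)\cong N/\bigl((N\cap L)+X\bigr)$ is singular (as a quotient of $N/X$) to force $L+X=M$ via $N/X\ll_\delta M/X$, then apply the modular law and $N\cap L\ll_\delta N$ to conclude $X=N$. The only differences are cosmetic (the paper writes $K$ for your $L$ and is slightly terser in justifying the singularity of $M/(L+X)$).
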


\begin{proof} Let $N$ be a $\delta$-supplement of $K$ in $M$. Then
$N+K=M$ and $N \cap K \ll_{\delta} N$. Suppose $N/X$ is singular
and $N/X \ll_{\delta} M/X$ for some $X \leq N$. Then we have
$$N/X+ (K+X)/X=M/X,$$ and $$M/(K+X) \cong N/(N\cap K+ X) $$ is
singular as a factor module of the singular module $N/X$. Therefore
we have $(K+X)/X =M/X$ as $N/X \ll_{\delta} M/X$. Then we get
$K+X=M$, and so by modular law $N=N \cap K +X$. Since $N\cap K
\ll_{\delta} N$ and $N/X$ is singular, we have $X=N$. So that $N$ is
a $\delta$-coclosed submodule of $M$.
\end{proof}

In the following proposition we give some properties of
$\delta$-coclosed submodules.

\begin{proposition}\label{Proposition:Some properties of delta-coclosed submodules} Let $N$ be a $\delta$-coclosed submodule of $M$. Then
the following hold.
\begin{enumerate}
\item[(1)] If $K\leq N \leq M$ and $K \ll _{\delta} M$ then $K \ll
_{\delta} N$. Hence $\delta (N)=N \cap \delta (M)$.

\item[(2)] If $X$ is a proper submodule of $N$ such that $N/X \ll_{\delta}
M/X,$ then $N=X \oplus X'$ for some $X' \leq N$.

\item[(3)] If $N$ is singular, then $N$ is coclosed.
\end{enumerate}

\end{proposition}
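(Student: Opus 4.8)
The plan is to handle all three parts through one mechanism: push the situation into a factor module $M/X$ (or $M/L$), recognise a $\delta$-small submodule there, and then invoke the defining property of $\delta$-coclosedness of $N$. I would do (3) first, as it is immediate: assume $N$ is singular and $N/K\ll M/K$ for some $K\leq N$; then $N/K$ is again singular (writing $N\cong E/L$ with $L\unlhd E$ and pulling $K$ back to $\widetilde K\leq E$ gives $L\leq\widetilde K\unlhd E$ and $N/K\cong E/\widetilde K$), and an ordinary small submodule is in particular $\delta$-small, so $N/K\ll_{\delta}M/K$; applying the definition of $\delta$-coclosed with $X:=K$ yields $K=N$, so $N$ is coclosed.

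For (1), first take $K\leq N$ with $K\ll_{\delta}M$, and $L\leq N$ with $K+L=N$ and $N/L$ singular. Then $N/L\ll_{\delta}M/L$: if $(N/L)+(P/L)=M/L$ with $P\subsetneq M$, $L\subseteq P$, and $M/P$ singular, then $N+P=M$, hence $K+P=M$, contradicting $K\ll_{\delta}M$. So $N/L$ is singular and $\delta$-small in $M/L$, and $\delta$-coclosedness of $N$ forces $L=N$, i.e.\ $K\ll_{\delta}N$. For the identity, the inclusion $\delta(N)\subseteq N\cap\delta(M)$ is general, since $\delta(-)=\Rej_{(-)}(\mathcal{P})$ is a preradical: for $W\leq M$ with $M/W\in\mathcal{P}$, either $N\leq W$ or $N+W=M$ with $N/(N\cap W)\cong M/W\in\mathcal{P}$, and in both cases $\delta(N)\subseteq W$; intersect over $W$. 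Conversely, fix $W\leq N$ with $N/W\in\mathcal{P}$; if $N\cap\delta(M)\not\subseteq W$ then by maximality of $W$ in $N$ we get $N=W+(N\cap\delta(M))\subseteq W+\delta(M)$, so $N/W\subseteq(\delta(M)+W)/W\subseteq\delta(M/W)$. A simple submodule contained in $\delta(M/W)$ is $\delta$-small there (if $(N/W)+Z=M/W$ with $Z\subsetneq M/W$ and $(M/W)/Z$ singular, then $(N/W)\cap Z=0$, so $(M/W)/Z\cong N/W$ is singular simple, forcing $N/W\subseteq\delta(M/W)\subseteq Z$, a contradiction), and since $N/W$ is singular, $\delta$-coclosedness of $N$ gives $W=N$, contradicting simplicity of $N/W$. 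Hence $N\cap\delta(M)\subseteq\delta(N)$.

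For (2), I would pass to $\overline M=M/X$ and $\overline N=N/X\neq0$, so $\overline N\ll_{\delta}\overline M$. A preliminary observation, proved as in the first part of (1): for every $Y$ with $X\leq Y\leq N$ and $N/Y$ singular, the image of $\overline N$ in $M/Y$ is $N/Y$ and is $\delta$-small there, so $\delta$-coclosedness of $N$ forces $Y=N$; in particular $\overline N$ is not singular and, more generally, $\overline N$ has no nonzero singular quotient. Next choose $\overline Y\leq\overline N$ maximal among projective semisimple direct summands of $\overline M$ contained in $\overline N$, write $\overline M=\overline Y\oplus\overline C$ and $\overline N=\overline Y\oplus\overline N'$ with $\overline N'=\overline C\cap\overline N$, and note $\overline N'\ll_{\delta}\overline C$. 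Using Lemma~\ref{Lemma:Zhou-equivalent conditions for delta-small modules} inside $\overline C$ together with maximality of $\overline Y$ (a nonzero projective semisimple summand split off from $\overline C$ inside $\overline N'$ would enlarge $\overline Y$) one gets $\overline N'\ll\overline C$ in the ordinary sense, hence $\overline N'\ll\overline M$; combining this with the fact that $\overline N'$, as a quotient of $\overline N$, has no nonzero singular quotient, one concludes $\overline N'=0$. Then $N/X$ is projective semisimple, so $0\to X\to N\to N/X\to0$ splits, giving $N=X\oplus X'$ with $X'\leq N$.

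The step I expect to be the main obstacle is precisely the last one, forcing $\overline N'=0$: this requires carefully reconciling ordinary smallness, $\delta$-smallness and (non)singularity. One must keep in mind that $M/X$ need not be finitely generated, so that radicals and singular submodules are not automatically small, and that the class of singular modules is not closed under extensions; the role of $\delta$-coclosedness of $N$ is exactly to supply the missing control, by excluding a singular obstruction at every intermediate stage.
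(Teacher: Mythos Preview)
Your treatments of (3) and of the first assertion in (1) are correct and coincide with the paper's argument (the paper simply cites \cite{Zhou}*{Lemma 1.3(2)} where you reprove it inline). For the identity $\delta(N)=N\cap\delta(M)$ your argument works but is considerably more involved than the paper's: once you know that $K\leq N$ with $K\ll_\delta M$ implies $K\ll_\delta N$, you can just take $x\in N\cap\delta(M)$, note $Rx\ll_\delta M$, conclude $Rx\ll_\delta N$, and hence $x\in\delta(N)$. There is no need to argue via maximal $W$ with $N/W\in\mathcal{P}$.

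Your proof of (2), however, has a genuine gap. After reducing to $\overline{N}'\ll\overline{C}$ (ordinary small) and observing that $\overline{N}'$, being a quotient of $\overline{N}$, has no nonzero singular quotient, you assert ``one concludes $\overline{N}'=0$''. But nothing you have established forces this: a nonzero submodule can be small in an ambient module and still have no singular quotient, and you cannot invoke $\delta$-coclosedness of $N$ at the relevant submodule $Y$ (the preimage of $\overline{Y}$) precisely because $N/Y\cong\overline{N}'$ is \emph{not} singular. You yourself flag this step as the main obstacle, and indeed it is not overcome. There is also a secondary problem: the existence of a maximal projective semisimple direct summand $\overline{Y}$ of $\overline{M}$ contained in $\overline{N}$ is not clear, since a union of a chain of direct summands need not be a direct summand.

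The paper's route for (2) is entirely different and avoids all of this. Choose $X'\leq N$ maximal with $X\cap X'=0$ (this exists by Zorn, as the condition $X\cap X'=0$ is preserved under unions of chains). Then $X\oplus X'\unlhd N$ by a standard complement argument, so $N/(X\oplus X')$ is singular. Since $N/X\ll_\delta M/X$, passing to the further quotient gives $N/(X\oplus X')\ll_\delta M/(X\oplus X')$. Now $\delta$-coclosedness of $N$ applies directly and yields $N=X\oplus X'$. The key idea you are missing is to manufacture a submodule between $X$ and $N$ whose cokernel in $N$ is \emph{automatically} singular; taking an essential extension of $X$ inside $N$ (equivalently, an intersection-complement) does exactly that.
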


\begin{proof}$(1)$ Let $K \ll _{\delta} M$ and suppose $K+X=N$ for some $X\leq N$ with $N/X$
singular. Then $N/X=(K+X)/X \ll _{\delta}M/X$ by \cite{Zhou}*{Lemma 1.3(2)}. So that $X=N$, because $N$ is $\delta$-coclosed.

Clearly $\delta (N) \leq N \cap \delta (M)$. Therefore we only need
to prove that $N \cap \delta (M)
 \leq \delta (N)$. Let $ x \in N \cap \delta (M)$. Then $Rx \ll _{\delta} M$, and so by the first part of the proof $Rx \ll _{\delta}N$, that is, $x \in \delta
 (N)$. Hence $\delta (N)=N \cap \delta (M)$.

$(2)$ Let $X \leq N$ with $N/X \ll _{\delta} M/X$. Let $X' \leq N$
be the maximal submodule in $N$ such that $X \cap X'=0$. Then
$X\oplus X' \unlhd N$ by \cite{Anderson}*{Proposition 5.21 (1)}, and
so $N/(X\oplus X')$ is singular. On the other hand, $N/(X\oplus X')
\ll _{\delta} M/(X\oplus X')$. Since $N$ is $\delta$-coclosed, we
have $N=X\oplus X'$, as desired.

$(3)$ Since singular modules are closed under factor modules, this
is clear.
\end{proof}

\begin{corollary} Let $N$ be a $\delta$-supplement submodule of $M$.
Then $\delta (N)=N \cap \delta (M)$.
\end{corollary}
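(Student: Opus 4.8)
The plan is to obtain this as an immediate consequence of the two results just established. First I would invoke Lemma~\ref{Lemma:Delta-supplement is delta-coclosed} to conclude that the $\delta$-supplement $N$ is a $\delta$-coclosed submodule of $M$. Then I would apply part~(1) of Proposition~\ref{Proposition:Some properties of delta-coclosed submodules}, whose second assertion is precisely the identity $\delta(N)=N\cap\delta(M)$ for any $\delta$-coclosed $N$. So the whole argument is just the composition of these two facts, and there is essentially no obstacle to overcome.

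If one preferred a self-contained proof not citing Proposition~\ref{Proposition:Some properties of delta-coclosed submodules} directly, I would unwind the definitions as in the proof of that proposition: the inclusion $\delta(N)\leq N\cap\delta(M)$ is clear, so it suffices to show $N\cap\delta(M)\leq\delta(N)$. Given $x\in N\cap\delta(M)$ one has $Rx\ll_\delta M$, and then, using that $N$ is $\delta$-coclosed, one checks that $Rx\ll_\delta N$: if $Rx+X=N$ with $N/X$ singular, then $N/X=(Rx+X)/X\ll_\delta M/X$ by \cite{Zhou}*{Lemma 1.3(2)}, forcing $X=N$. Hence $x\in\delta(N)$, which gives the reverse inclusion.

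The only point requiring any care is that the hypothesis "$N$ is a $\delta$-supplement" is genuinely used — it enters solely through Lemma~\ref{Lemma:Delta-supplement is delta-coclosed} to supply the $\delta$-coclosedness of $N$ — so the statement is best read as a corollary recording that $\delta$-supplements inherit this $\delta$-radical intersection property enjoyed by $\delta$-coclosed submodules in general.
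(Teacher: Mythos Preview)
Your proof is correct and follows exactly the paper's approach: the paper's proof simply cites Lemma~\ref{Lemma:Delta-supplement is delta-coclosed} and Proposition~\ref{Proposition:Some properties of delta-coclosed submodules}(1), which is precisely what you do. The optional unwound argument you sketch is also just the content of the proof of Proposition~\ref{Proposition:Some properties of delta-coclosed submodules}(1), so there is no difference in substance.
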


\begin{proof} By Lemma \ref{Lemma:Delta-supplement is
delta-coclosed} and Proposition \ref{Proposition:Some properties of
delta-coclosed submodules}(1).
\end{proof}

\begin{corollary}For a module $M$ and a submodule $N \leq M$, consider the following statements.

\begin{enumerate}
\item[(1)] $N$ is a $\delta$-supplement submodule of $M$.
\item[(2)] $N$ is $\delta$-coclosed in $M$.
\item[(3)] For all $X\leq N$, $X \ll _{\delta} M$ implies $X\ll
_{\delta}N$.
\end{enumerate}
Then $(1)\Rightarrow (2)\Rightarrow (3)$ hold. If $N$ has a \emph{weak $\delta$-supplement} in $M$, i.e. $N+K=M$ and $N\cap K \ll _{\delta} M$ for some submodule $K \leq M$, then $(3)\Rightarrow (1)$ holds.
\end{corollary}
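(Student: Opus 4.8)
The implications $(1)\Rightarrow(2)\Rightarrow(3)$ are already covered by Lemma~\ref{Lemma:Delta-supplement is delta-coclosed} and Proposition~\ref{Proposition:Some properties of delta-coclosed submodules}(1), so the only work is to prove $(3)\Rightarrow(1)$ under the hypothesis that $N$ has a weak $\delta$-supplement $K$ in $M$. The plan is to show that this $K$ is in fact a genuine $\delta$-supplement of $N$, i.e.\ that $N\cap K\ll_\delta K$ (we already know $N+K=M$). So I would start from the data: $N+K=M$, $N\cap K\ll_\delta M$, and the property (3) applied to submodules of $N$. The obvious first move is to use (3) on the submodule $N\cap K\leq N$: since $N\cap K\ll_\delta M$, property (3) gives $N\cap K\ll_\delta N$. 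But that tells us $N$ is a $\delta$-supplement of $K$, which is symmetric enough in the relation $N+K=M$ that it should hand us what we want --- though I want to be careful, because ``$\delta$-supplement'' as defined is not obviously a symmetric relation, so I cannot just swap $N$ and $K$.

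Here is the more careful route. Having established $N\cap K\ll_\delta N$, I claim $N$ is then a $\delta$-supplement of $K$ in $M$ by definition (both conditions $N+K=M$ and $N\cap K\ll_\delta N$ hold). The goal, however, is to exhibit a $\delta$-supplement \emph{of} $N$, not of $K$. The natural candidate is still $K$: I need $N\cap K\ll_\delta K$. To get this, I would invoke Lemma~\ref{Lemma:Zhou-equivalent conditions for delta-small modules} applied to $N\cap K\ll_\delta M$: writing $L=N\cap K$, since $L+K=K$ is not directly of the form $L+X=M$, I instead use that $L\ll_\delta M$ is inherited downward along a decomposition. Concretely, by Zhou's Lemma~1.3(2) (invoked in the proof of Proposition~\ref{Proposition:Some properties of delta-coclosed submodules}(1)) $\delta$-smallness passes to direct summands, and more usefully: if $L\ll_\delta M$ and $L\leq K\leq M$, one wants $L\ll_\delta K$. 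This is exactly the content of property (3) but with $K$ in place of $N$ --- which is \emph{not} given. So the honest approach is: apply (3) to get $N\cap K\ll_\delta N$, conclude $N$ is a $\delta$-supplement, and then separately argue $K$ works.

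The cleanest finish avoids this symmetry worry entirely. Once we know $N\cap K\ll_\delta N$, apply Lemma~\ref{Lemma:Delta-supplement is delta-coclosed} to conclude $N$ is $\delta$-coclosed --- but we already have (2) from (1)$\Rightarrow$(2) is not available since we're proving (1). Instead, I would argue as follows: $N\cap K\ll_\delta M$ together with $N+K=M$ and Lemma~\ref{Lemma:Zhou-equivalent conditions for delta-small modules} gives, from the equation $(N\cap K)+N=N$... no. Let me just go with: since $N\cap K\ll_\delta M$ and $(N\cap K)+N = N\ne M$ in general, the direct application of Zhou's Lemma~1.2 isn't to $M$. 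The right statement is Zhou's Lemma~1.3, which says $\delta$-smallness is inherited by submodules containing the $\delta$-small one \emph{when} that larger submodule is a direct summand or more generally is $\delta$-coclosed-like; lacking that, property (3) is precisely the substitute the hypothesis forces, applied with the role of the ambient module's distinguished submodule. Therefore: \textbf{Step 1}, apply (3) to $N\cap K\leq N$ to get $N\cap K\ll_\delta N$; \textbf{Step 2}, observe $N+K=M$ and $N\cap K\ll_\delta N$, so by the definition of $\delta$-supplement $N$ is a $\delta$-supplement of $K$; \textbf{Step 3}, note the proposition only asks that $N$ be ``a $\delta$-supplement submodule of $M$'', i.e.\ a $\delta$-supplement of \emph{some} submodule --- and $N$ is a $\delta$-supplement of $K$, so (1) holds.

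The main obstacle is a definitional subtlety rather than a hard argument: one must read ``$\delta$-supplement submodule of $M$'' in statement (1) as ``$\delta$-supplement of some submodule of $M$'' (the standard reading, as in the definition of supplemented modules), so that exhibiting $N$ as a $\delta$-supplement of $K$ suffices; the genuine mathematical step is the single application of property (3) to the submodule $N\cap K$ of $N$, using the weak-$\delta$-supplement hypothesis $N\cap K\ll_\delta M$. If instead one wanted the stronger conclusion that $K$ is a $\delta$-supplement of $N$, one would additionally need $N\cap K\ll_\delta K$, which would require knowing $K$ (not just $N$) satisfies (3) --- and that is why the hypothesis is phrased as the existence of a weak $\delta$-supplement: it is $N$'s property (3) that does the work, producing a $\delta$-supplement relation in which $N$ is the supplementing module.
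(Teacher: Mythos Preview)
Your final three-step argument is exactly the paper's proof: apply (3) to the submodule $N\cap K\leq N$ (using $N\cap K\ll_\delta M$ from the weak $\delta$-supplement hypothesis) to obtain $N\cap K\ll_\delta N$, and conclude that $N$ is a $\delta$-supplement of $K$ in $M$. The extended detour attempting to show $N\cap K\ll_\delta K$ is unnecessary---as you eventually recognize, statement (1) only requires $N$ to be a $\delta$-supplement of \emph{some} submodule, so once you have $N\cap K\ll_\delta N$ you are done.
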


\begin{proof}$(1)\Rightarrow (2)$ By Lemma \ref{Lemma:Delta-supplement is delta-coclosed}.

$(2)\Rightarrow (3)$ By Proposition \ref{Proposition:Some properties of delta-coclosed submodules}(1).

$(3)\Rightarrow (1)$ Suppose $N$ has a weak $\delta$-supplement in $M$. Then $N+L=M$ and $N\cap L \ll _{\delta}M$. Then $N\cap L \ll
_{\delta} N$ by (3), i.e $N$ is a $\delta$-supplement of $L$ in $M$.
\end{proof}

\section{On the structure of $\delta$-supplemented modules}

A module $M$ is said to be \emph{local} if $N$ has a largest proper
submodule. It is easy to see that, $M$ is local if and only if $\Rad
(M)$ is a maximal submodule of $M$ and $\Rad (M) \ll M$, (see
\cite{Wisbauer}*{41.4}).

\begin{definition} Let $M$ be an $R$-module. $M$ is said to be
$\delta$-local if $\delta (M) \ll _{\delta} M$ and $\delta (M)$ is a
maximal submodule of $M$.
\end{definition}

It is easy to see that, every simple module is local, and a simple
module is $\delta$-local if and only if it is singular. Let $S$ be a
nonsingular simple module and $S'$ be a singular simple module. Then
$S$ is local but not $\delta$-local, since $\delta (S)=S$. On the
other hand, let $M=S\oplus S'$, then clearly $M$ is not local. Since
$\delta (S)=S$ and $\delta (S')=0$, we have $\delta (M)=\delta
(S)\oplus \delta (S')=S$. Clearly $\delta (M)$ is maximal, and
nonsingularity of $S$ implies $\delta (M) \ll _{\delta}M$, so that
$M$ is $\delta$-local.

The following lemma is elementary, we include it for completeness.

\begin{lemma}\label{Lemma:local module is supplement} Let $M$ be a module and $H$ a local
submodule of $M$. Then $H$ is a supplement of each proper submodule
$K \leq M$ with $H+K=M$.
\end{lemma}

\begin{proof} Since $K$ is a proper submodule of $M$ and $K+H=M$, we
have $K\cap H$ is a proper submodule of $H$. Therefore $K\cap H \ll
H$, since $H$ is local. That is, $H$ is a supplement of $K$ in $M$.
\end{proof}



\begin{lemma}\label{Lemma:delta local is delta supplemented}Any $\delta$-local module is $\delta$-supplemented.
\end{lemma}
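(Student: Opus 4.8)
The plan is to show directly that for a $\delta$-local module $M$, every submodule $L \leq M$ has a $\delta$-supplement, using the fact that $\delta(M)$ is maximal and $\delta$-small. First I would split into two cases according to whether $L + \delta(M) = M$ or $L + \delta(M) \neq M$. In the second case, since $\delta(M)$ is the (unique) maximal submodule of $M$ and $L + \delta(M)$ is a submodule properly... wait, more carefully: $L+\delta(M)$ is a submodule containing $\delta(M)$; since $\delta(M)$ is maximal, either $L+\delta(M)=M$ or $L+\delta(M)=\delta(M)$, i.e. $L \leq \delta(M)$. So the dichotomy is really ``$L \leq \delta(M)$'' versus ``$L + \delta(M) = M$''.

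If $L \leq \delta(M)$, then I claim $M$ itself is a $\delta$-supplement of $L$: trivially $M = L + M$, and $L \cap M = L \leq \delta(M)$. It remains to see $L \ll_\delta M$; since $\delta(M) \ll_\delta M$ by hypothesis and $L \leq \delta(M)$, any submodule of a $\delta$-small submodule is $\delta$-small (this is a standard property, following from \cite{Zhou}*{Lemma 1.3}), so $L \ll_\delta M$, giving $L \cap M = L \ll_\delta M$. Hence $M$ is a $\delta$-supplement of $L$. In the remaining case $L + \delta(M) = M$, I would use that $\delta(M) \ll_\delta M$ together with Lemma~\ref{Lemma:Zhou-equivalent conditions for delta-small modules}: from $L + \delta(M) = M$ we get $M = L \oplus Y$ for a projective semisimple submodule $Y \leq \delta(M)$. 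Then $L$ is a direct summand of $M$, and a direct summand $N$ of $M$ is always a $\delta$-supplement of its complement (and of itself): indeed $M = L + L$ is false, so instead observe $L$ is a $\delta$-supplement of $Y$, since $M = L + Y$ and $L \cap Y = 0 \ll_\delta L$. Actually what we want is a $\delta$-supplement \emph{of} $L$, not of $Y$; but $Y$ serves: $M = Y + L$ and $Y \cap L = 0 \ll_\delta Y$, so $Y$ is a $\delta$-supplement of $L$ in $M$. Either way $L$ has a $\delta$-supplement.

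Since $L$ was an arbitrary submodule of $M$, this shows $M$ is $\delta$-supplemented. The only mildly delicate point is the invocation that a submodule of a $\delta$-small submodule is $\delta$-small, and the bookkeeping in the case $L+\delta(M)=M$ to make sure we produce a $\delta$-supplement of $L$ rather than of some auxiliary submodule; both are handled cleanly by Lemma~\ref{Lemma:Zhou-equivalent conditions for delta-small modules} and the cited properties from \cite{Zhou}. I do not expect any real obstacle here — the lemma is a direct unwinding of the definitions of $\delta$-local and $\delta$-supplemented, and the main structural input (splitting off a projective semisimple summand from a $\delta$-small-covered submodule) is exactly Lemma~\ref{Lemma:Zhou-equivalent conditions for delta-small modules}.
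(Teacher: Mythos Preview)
Your proposal is correct and follows essentially the same argument as the paper: split into the cases $L\leq\delta(M)$ (where $M$ is a $\delta$-supplement of $L$) and $L+\delta(M)=M$ (where Lemma~\ref{Lemma:Zhou-equivalent conditions for delta-small modules} yields $M=L\oplus Y$ with $Y$ a $\delta$-supplement of $L$). The paper's write-up is terser---it simply asserts ``clearly $M$ is a $\delta$-supplement of $N$'' in the first case---but your explicit justification that submodules of $\delta$-small submodules are $\delta$-small is exactly what underlies that clause.
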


\begin{proof}Let $N \leq M$ be a proper submodule of $M$. Since $\delta (M)$ is a maximal submodule of $M$, we have either
$N\leq \delta (M)$ or $\delta (M)+N=M$. If $N \leq \delta (M)$
then, clearly $M$ is a $\delta$-supplement of $N$ in $M$. Now
suppose $N+\delta (M)=M$. Since $\delta (M)\ll _{\delta}M$, we have
by Lemma \ref{Lemma:Zhou-equivalent conditions for delta-small
modules}(2) $N\oplus Y=M$ for some semisimple submodule $Y \leq
\delta (M)$. Clearly, $Y$ is a $\delta$-supplement of $N$ in $M$.
Therefore $M$ is $\delta$-supplemented.
\end{proof}

\begin{lemma}\label{Lemma:delta supplement of maximal submodule is delta-local}Let $M$ be an $R$-module and let $K$ be a maximal
submodule with $\Soc (M)\leq K$. Suppose $L$ is a
$\delta$-supplement of $K$ in $M$, then $L$ is $\delta$-local.
\end{lemma}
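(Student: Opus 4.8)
The plan is to identify $\delta(L)$ with $L\cap K$; once that is done, $L$ is $\delta$-local essentially by definition.

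First I would record the data coming from the hypothesis that $L$ is a $\delta$-supplement of $K$, namely $L+K=M$ and $L\cap K\ll_{\delta}L$. Since $K\neq M$, the equality $L+K=M$ forces $L\not\leq K$, so $L\cap K$ is a proper submodule of $L$; and since $K$ is maximal in $M$, the isomorphism $L/(L\cap K)\cong (L+K)/K=M/K$ shows that $L\cap K$ is a maximal submodule of $L$. Moreover $L\cap K\ll_{\delta}L$ gives $L\cap K\leq\delta(L)$, so by maximality of $L\cap K$ in $L$ we have either $\delta(L)=L\cap K$ (which is what we want) or $\delta(L)=L$.

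Next I would rule out the second possibility, and in fact prove the sharper statement $\delta(L)\leq L\cap K$. Suppose not. Since $\delta(L)$ is the sum of all $\delta$-small submodules of $L$, if every $\delta$-small submodule of $L$ were contained in $L\cap K$ we would get $\delta(L)\leq L\cap K$; hence there is a submodule $N\leq L$ with $N\ll_{\delta}L$ and $N\not\leq L\cap K$. By maximality of $L\cap K$ in $L$ this yields $N+(L\cap K)=L$. Now I apply Lemma~\ref{Lemma:Zhou-equivalent conditions for delta-small modules} to the $\delta$-small submodule $N$: from $N+(L\cap K)=L$ we obtain a decomposition $L=(L\cap K)\oplus Y$ with $Y$ a projective semisimple submodule of $L$ contained in $N$.

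The crux is that $Y$ cannot survive. Being semisimple, $Y\leq\Soc(L)$, and $\Soc(L)\leq\Soc(M)$ since every simple submodule of $L$ is a simple submodule of $M$; by hypothesis $\Soc(M)\leq K$, so $Y\leq L\cap K$. Together with $Y\cap(L\cap K)=0$ this forces $Y=0$, whence $L=L\cap K$, contradicting that $L\cap K$ is a proper submodule of $L$. Therefore $\delta(L)\leq L\cap K$, and combined with $L\cap K\leq\delta(L)$ we conclude $\delta(L)=L\cap K$. Thus $\delta(L)$ is a maximal submodule of $L$ with $\delta(L)=L\cap K\ll_{\delta}L$, i.e. $L$ is $\delta$-local. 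The only mildly delicate point is extracting the $\delta$-small submodule $N$ not lying in $L\cap K$ and then using the hypothesis $\Soc(M)\leq K$ to annihilate the projective semisimple summand $Y$ produced by Lemma~\ref{Lemma:Zhou-equivalent conditions for delta-small modules}; the rest is bookkeeping with the modular law and maximality.
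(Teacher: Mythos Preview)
Your proof is correct. Both the paper and you establish $\delta(L)=L\cap K$ by combining the obvious inclusion $L\cap K\leq\delta(L)$ (from $L\cap K\ll_{\delta}L$) with the reverse inclusion $\delta(L)\leq L\cap K$, and in both cases the hypothesis $\Soc(M)\leq K$ is what kills an unwanted semisimple piece.

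The difference is in how the reverse inclusion is obtained. The paper argues directly that $L\cap K$ is \emph{essential} in $L$: if some nonzero $T\leq L$ met $L\cap K$ trivially, then by maximality of $L\cap K$ one has $L=(L\cap K)\oplus T$ with $T$ simple, whence $T\leq\Soc(M)\leq K$ while $M=K+T$ forces $T\not\leq K$, a contradiction. Essentiality together with maximality makes $L/(L\cap K)$ a \emph{singular} simple module, so $\delta(L)\leq L\cap K$ drops out of the very definition of $\delta$ as the reject of the singular simples. Your route instead assumes $\delta(L)\not\leq L\cap K$, picks a $\delta$-small $N\not\leq L\cap K$, and invokes Lemma~\ref{Lemma:Zhou-equivalent conditions for delta-small modules} to manufacture a projective semisimple complement $Y$ of $L\cap K$, which $\Soc(M)\leq K$ then forces to be zero. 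The paper's path is slightly more economical---it never needs Zhou's structural lemma, only the definition of $\delta$---while yours makes the obstruction (a semisimple direct summand that cannot survive) more explicit.
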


\begin{proof} By hypothesis, we have $K+L=M$ and $K\cap L \ll _{\delta}
L$. We claim that $K\cap L$ is an essential submodule of $L$.
Really, if $(K\cap L)\cap T=0$ for some nonzero submodule $T \leq
L$, then $L=(K\cap L)\oplus T$ and $L/(K\cap L) \cong T$ is simple.
We get $M=K+L=K+T$, and so $T \nleq K$ gives a contradiction since
$\Soc (M) \leq K$. Therefore $\delta (L) \leq K\cap L$. Hence
$\delta (L)=
 K\cap L$
\end{proof}

A submodule $N\leq M$ is called \emph{cofinite} if $M/N$ is finitely
generated. $M$ is called \emph{cofinitely $\delta$-supplemented} if
every cofinite submodule of $M$ has a $\delta$-supplement in $M$. In
case $M$ is finitely generated, clearly every submodule of $M$ is
cofinite, and so $M$ is $\delta$-supplemented if and only if $M$ is
cofinitely $\delta$-supplemented. Therefore by \cite{KAT}*{Proposition 2.5}, if a finitely generated module $M$ is a sum of
$\delta$-supplemented modules then $M$ is $\delta$-supplemented.

\begin{proposition}\label{Proposition: Strucrure of finitely generated delta supplemented modules} For a finitely generated module $M$, the following are equivalent.

\begin{enumerate}
\item[(1)] $M$ is $\delta$-supplemented.
\item[(2)] every maximal submodule of $M$ has a $\delta$-supplement.
\item[(3)] $M=H_{1}+ H_{2} + \cdots + H_{n}$ where $H_{i}$ is either simple
or $\delta$-local.
\end{enumerate}
\end{proposition}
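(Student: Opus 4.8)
The plan is to prove the cycle of implications $(1)\Rightarrow(2)\Rightarrow(3)\Rightarrow(1)$, using the lemmas developed earlier in this section together with the remark that a finitely generated sum of $\delta$-supplemented modules is $\delta$-supplemented (via \cite{KAT}*{Proposition 2.5}).

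The implication $(1)\Rightarrow(2)$ is immediate from the definition of a $\delta$-supplemented module, since a maximal submodule is in particular a submodule, hence has a $\delta$-supplement.

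For $(2)\Rightarrow(3)$ I would argue by induction on the length of a composition-type decomposition, or more directly as follows. If $M$ has no maximal submodule then, being finitely generated, $M=0$ and the statement holds vacuously; so assume $M$ has a maximal submodule. The key idea is to handle each maximal submodule $K$ using its $\delta$-supplement $L$. First reduce to the case $\Soc(M)\leq K$: if $\Soc(M)\not\leq K$ then $\Soc(M)+K=M$ by maximality, so there is a simple summand $S$ of $\Soc(M)$ with $M=S\oplus K'$ for an appropriate complement, and then one peels off $S$ (which is simple, hence of the required form) and applies induction to the smaller module. When $\Soc(M)\leq K$, Lemma \ref{Lemma:delta supplement of maximal submodule is delta-local} tells us the $\delta$-supplement $L$ of $K$ is $\delta$-local. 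Now $L$ is a direct summand up to a small piece is not quite what we want; instead, the cleanest route is to intersect over \emph{all} maximal submodules: writing $\Rad(M)=\bigcap K_i$ over the finitely many maximal submodules (finitely many since $M/\Rad(M)$ is finitely generated and semisimple once we know it is semisimple), and noting that for each $K_i$ we get either a simple or $\delta$-local submodule $H_i$ with $H_i+K_i=M$, the sum $H_1+\dots+H_n$ surjects onto $M/\Rad(M)$; finishing requires showing this sum is all of $M$, which follows because $M=H_1+\dots+H_n+\Rad(M)$ and $\Rad(M)$ is small in $M$ (finitely generated). Care is needed to ensure $\Rad(M)$ is genuinely small and that condition (2) forces enough structure; the honest argument tracks the maximal submodules one at a time and invokes the reduction above, so I would present it inductively.

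For $(3)\Rightarrow(1)$: each $H_i$ is either simple — hence $\delta$-supplemented trivially — or $\delta$-local, hence $\delta$-supplemented by Lemma \ref{Lemma:delta local is delta supplemented}. Thus $M$ is a finite sum of $\delta$-supplemented modules, and since $M$ is finitely generated, the remark following Lemma \ref{Lemma:delta supplement of maximal submodule is delta-local} (i.e. \cite{KAT}*{Proposition 2.5}) gives that $M$ is $\delta$-supplemented.

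The main obstacle is the implication $(2)\Rightarrow(3)$, specifically the inductive bookkeeping: one must argue that finitely many maximal submodules suffice and that splitting off simple direct summands when $\Soc(M)\not\leq K$ does not destroy the hypothesis (2) for the smaller module. The safest formulation is an induction on $\mathrm{length}(M/\Rad(M))$ (which is finite for finitely generated $M$ once one observes $M/\Rad(M)$ is finitely generated semisimple): in the base case $M=\Rad(M)$, but a nonzero finitely generated module has a maximal submodule, so $M=0$; in the inductive step pick a maximal $K$, take its $\delta$-supplement $L$, split off a simple or $\delta$-local piece according to the two cases of Lemma \ref{Lemma:delta supplement of maximal submodule is delta-local}, verify the complementary summand (or quotient) still satisfies (2), and apply the inductive hypothesis.
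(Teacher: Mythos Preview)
Your arguments for $(1)\Rightarrow(2)$ and $(3)\Rightarrow(1)$ are correct and match the paper's. The problem is $(2)\Rightarrow(3)$, where your proposal has a genuine gap.

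You repeatedly assume that $M/\Rad(M)$ is semisimple (equivalently, of finite length, equivalently, that $M$ has only finitely many maximal submodules). For an arbitrary finitely generated module this is false, and hypothesis~(2) alone does not obviously force it: you write ``finitely many since $M/\Rad(M)$ is finitely generated and semisimple once we know it is semisimple'', which is circular. Consequently your induction on $\mathrm{length}(M/\Rad(M))$ is not well-founded, and the alternative argument via ``$M=H_1+\cdots+H_n+\Rad(M)$ with $\Rad(M)\ll M$'' never gets off the ground because you cannot produce a \emph{finite} list $H_1,\dots,H_n$ covering all maximal submodules. The inductive step is also problematic on its own terms: when $\Soc(M)\leq K$ the $\delta$-local $\delta$-supplement $L$ need not be a direct summand of $M$, so there is no ``complementary summand'' to which one could apply induction, and you give no argument that a quotient inherits~(2) or that the proposed invariant drops.

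The paper avoids all of this with a direct maximality argument that uses only finite generation of $M$, not semilocality. Let $\Lambda(M)$ be the sum of all $\delta$-supplements of maximal submodules $N$ with $\Soc(M)\leq N$; by Lemma~\ref{Lemma:delta supplement of maximal submodule is delta-local} each such supplement is $\delta$-local. If $\Soc(M)+\Lambda(M)\neq M$, then by finite generation it lies in some maximal $K$; the $\delta$-supplement $L$ of $K$ is then $\delta$-local (since $\Soc(M)\leq K$), so $L\leq\Lambda(M)\leq K$, contradicting $L+K=M$. Hence $M=\Soc(M)+\Lambda(M)$, and finite generation of $M$ lets one extract finitely many simple and $\delta$-local summands. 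No assumption on $M/\Rad(M)$ is needed.
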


\begin{proof}$(1)\Rightarrow (2)$Clear.

$(2)\Rightarrow (3)$ Let $\Lambda (M) \leq M$ be the sum of all
$\delta$-supplement submodules of maximal submodules $N \leq M$
with $\Soc (M) \leq N$. Then by Lemma \ref{Lemma:delta supplement of
maximal submodule is delta-local} $\Lambda (M)$ is a sum of
$\delta$-local submodules of $M$. We claim that $M=\Soc (M) +
\Lambda (M)$. Suppose the contrary, then $\Soc (M) + \Lambda (M)\leq
K$ for some maximal submodule $K \leq M$, because $M$ is finitely
generated. By (2) $K$ has a $\delta$-supplement $L$ in $M$. Since
$\Soc (M) \leq K$, $L$ is $\delta$-local by Lemma \ref{Lemma:delta
supplement of maximal submodule is delta-local}. Hence $L \leq
\Lambda (M)\leq K$, a contradiction. Therefore $M=\Soc (M) + \Lambda
(M)$. Since $M$ is finitely generated, $M$ is a finite sum of simple
submodules and $\delta$-local submodules, as desired.

$(3)\Rightarrow (1)$ By Lemma \ref{Lemma:delta local is delta
supplemented}, $\delta$-local modules are $\delta$-supplemented, and
clearly simple modules are also $\delta$-supplemented. Therefore $M$
is $\delta$-supplemented as a finite sum of $\delta$-supplemented
modules.
\end{proof}

By \cite{Wisbauer}*{41.6}, a finitely generated module is supplemented if and only if it is a (finite) sum local modules.
Hence we can conclude from Proposition \ref{Proposition: Strucrure of finitely generated delta supplemented modules} that if any $\delta$-local submodule of a module $M$ with finitely generated socle is local, then $M$ is supplemented if and only if it is $\delta$-supplemented.




\section{When are $\delta$-supplemented modules supplemented}

We will turn to the problem of characterising when a $\delta$-semiperfect ring is semiperfect. Recall that a module $M$ is called semilocal if $M/\Rad(M)$ is semisimple.

\begin{lemma}\label{Lemma_Semilocal} Let $R$ be a ring and $M$ a finitely generated, $\delta$-supplemented left $R$-module.
Then $M$ is semilocal if and only if $\Soc(M)/\Soc(M)\cap \Rad(M)$ is finitely generated.
\end{lemma}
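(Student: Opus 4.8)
The plan is to use the structure theorem for finitely generated $\delta$-supplemented modules (Proposition \ref{Proposition: Strucrure of finitely generated delta supplemented modules}) to pin down $M/\Rad(M)$. Since $M$ is finitely generated and $\delta$-supplemented, write $M = H_1 + \cdots + H_n$ where each $H_i$ is simple or $\delta$-local. Passing to $\bar M = M/\Rad(M)$, each $\bar H_i = (H_i + \Rad(M))/\Rad(M)$ is an epimorphic image of $H_i$; a simple $H_i$ maps to a simple module or to $0$, and a $\delta$-local $H_i$ maps to a module that is either $0$ or still $\delta$-local (a factor of a $\delta$-local module by a submodule not containing $\delta(H_i)$ is simple, and $\Rad(\bar H_i)=0$ forces the image to be simple or zero — here one uses that $\delta(H_i)/ \Rad(H_i)$, being a sum of $\delta$-small but possibly non-small pieces, is semisimple). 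So $\bar M$ is a finite sum of simple modules and of $\delta$-local modules with zero radical; a $\delta$-local module $L$ with $\Rad(L)=0$ has $\delta(L)$ maximal, and since $\delta(L)$ is semisimple (being a sum of $\delta$-small submodules, each of which is semisimple projective over its containing module by Lemma \ref{Lemma:Zhou-equivalent conditions for delta-small modules} applied appropriately), such $L$ is itself semisimple. Hence $\bar M$ is semisimple precisely when this finite sum is semisimple — but that is automatic once every summand is semisimple, so we must be more careful: the subtlety is that a $\delta$-local $H_i$ need not be semisimple, only $\bar H_i$ is, and the real obstruction to $\bar M$ being semisimple is whether the (possibly infinitely generated) semisimple part contributed by the $\delta(H_i)$'s survives into $\bar M$.

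More precisely, I would argue as follows. Let $K = \Soc(M)$. Since $\Soc(M) \leq$ the sum of the $\delta$-supplement-of-maximal pieces only up to socle (cf.\ the proof of Proposition \ref{Proposition: Strucrure of finitely generated delta supplemented modules}: $M = \Soc(M) + \Lambda(M)$ with $\Lambda(M)$ a sum of $\delta$-local submodules), we get $M/\Rad(M) = (\Soc(M)+\Rad(M))/\Rad(M) + (\Lambda(M)+\Rad(M))/\Rad(M)$. The second summand is a finite sum of $\delta$-local modules with zero radical, hence — by the semisimplicity of $\delta(L)$ for $\delta$-local $L$ noted above — it is semisimple and finitely generated. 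Therefore $M/\Rad(M)$ is semisimple if and only if $(\Soc(M)+\Rad(M))/\Rad(M) \cong \Soc(M)/(\Soc(M)\cap\Rad(M))$ is semisimple; but this module is always semisimple (a quotient of a semisimple module). So $M/\Rad(M)$ is semisimple — i.e.\ $M$ is semilocal — if and only if it is finitely generated, and since the $\Lambda$-part is already finitely generated, this happens if and only if $\Soc(M)/(\Soc(M)\cap\Rad(M))$ is finitely generated. This gives both implications.

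The main obstacle will be the claim that a $\delta$-local module $L$ with $\Rad(L) = 0$ is semisimple (equivalently, that $\delta(L)$ is semisimple). For this I expect to use Lemma \ref{Lemma:Zhou-equivalent conditions for delta-small modules}: $\delta(L)$ is the sum of the $\delta$-small submodules of $L$, and if $X \ll_\delta L$ then for any complement situation $X$ breaks off a projective semisimple direct summand; combined with $\Rad(L)=0$ — which says $L$ has no nonzero small submodules, so the "small part" is absent — one should be able to conclude $\delta(L)$ is a sum of simple (projective) modules. One must be slightly careful that $\delta(L)$ being a maximal submodule and semisimple, together with $\Rad(L)=0$ meaning the simple quotient $L/\delta(L)$ does not get glued small-ly back on, forces $L = \delta(L) \oplus (\text{simple})$ or $L$ semisimple directly; the cleanest route is probably to observe $\Rad(L) \le \delta(L)$ always, and that $\delta(L)/\Rad(L)$ is semisimple by \cite{Zhou}, so $\Rad(L)=0$ yields $\delta(L)$ semisimple, and then a maximal semisimple submodule with semisimple quotient makes $L$ semisimple. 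I would isolate this as a short preliminary observation before assembling the two directions of the equivalence.
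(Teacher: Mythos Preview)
Your approach has a genuine gap: the claim that a $\delta$-local module $L$ with $\Rad(L)=0$ is semisimple is false. The ring $R$ described in the introduction of the paper is itself a counterexample: it is $\delta$-local (with $\delta(R)=\Soc(R)$ an essential maximal ideal), it satisfies $\Rad(R)=\Jac{R}=0$ since it is von Neumann regular, yet $R$ is not semisimple because $\Soc(R)\neq R$. Your proposed justification in the last paragraph breaks at the final step: having a maximal semisimple submodule with simple quotient does \emph{not} force $L$ to be semisimple, precisely because that semisimple submodule may be essential (as $\Soc(R)$ is in $R$), so the short exact sequence $0\to\delta(L)\to L\to L/\delta(L)\to 0$ need not split.

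This error propagates through the whole argument: if your reasoning were correct it would show that $M/\Rad(M)$ is \emph{always} semisimple for finitely generated $\delta$-supplemented $M$, hence that such $M$ are always semilocal---but again $R$ itself refutes this. The paper's proof avoids this trap by running in the other direction: rather than trying to show the $\delta$-local pieces become semisimple modulo the radical, it uses the hypothesis that $X(M)=\Soc(M)/(\Soc(M)\cap\Rad(M))$ has finite length to peel off simple direct summands of $M$ one at a time by induction on that length, reducing to the base case $\Soc(M)\subseteq\Rad(M)$, where $\Rad(M)=\delta(M)$ and $M/\delta(M)$ is semisimple because $M$ is $\delta$-supplemented. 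The finite-length hypothesis is exactly what makes the induction terminate; it is genuinely needed and cannot be bypassed by a structural decomposition of $M$ alone.
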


\begin{proof}
If $M$ is semilocal (and finitely generated), then $M/\Rad(M)$ is semisimple artinian. Moreover
$$ X(M)=\Soc(M)/(\Soc(M)\cap \Rad(M)) \simeq (\Soc(M)+\Rad(M))/\Rad(M) \subseteq M/\Rad(M)$$
implies $X$ to be semisimple artinian, i.e. finitely generated.

To show the converse we use induction on the length of $X(M)=\Soc(M)/(\Soc(M)\cap \Rad(M))$.
Suppose $X(M)=0$, i.e. $\Soc(M)\subseteq \Rad(M)$, then $\Rad(M)=\delta(M)$ and hence $M/\delta(M)$ is semisimple.

Assume that any finitely generated $\delta$-supplemented module $N$ with $X(N)$ of length $n\geq 0$ is semilocal and let $M$ be a finitely generated $\delta$-supplemented module with $X(M)$ having length $n+1$. Since $\Soc(M)\not\subseteq \Rad(M)$, there exists a simple direct summand $E \subseteq M$ with $M=E\oplus N$ for some $N\subseteq M$. Morever $\Rad(M)=\Rad(N)$ and $\Soc(M)=E\oplus \Soc(N)$. Hence
$$ X(M) =\Soc(M)/(\Soc(M)\cap \Rad(M)) \simeq E \oplus \Soc(N)/(\Soc(N)\cap \Rad(N) = E\oplus X(N).$$
Thus $N$ is a finitely generated $\delta$-supplemented module (direct summands of $\delta$-supplemented modules are $\delta$-supplemented) and $X(N)$ has length $n$. By induction hypothesis $N$ is semilocal and hence $M=E\oplus N$ is semilocal.
\end{proof}

It is shown in \cite{Kosan}*{Theorem 3.3} that, $\delta$-semiperfect
rings are exactly those rings $R$ that are $\delta$-supplemented as
a left (or right) $R$-module. Similarly, a ring $R$ is semiperfect
if and only if $R$ is supplemented as a left (or right) $R$-module
(see, \cite{Wisbauer}*{42.6}).

Recall that projective $\delta$-supplemented modules $M$ are $\delta$-lifting in the sense of \cite{Kosan}, i.e for every submodule $N$ of $M$ there exists a decomposition $M=D_1\oplus D_2$ such that $D_1\subseteq N$ and $N\cap D_2 \ll_\delta D_2$.

\begin{proposition}\label{Proposition_delta_lifting}
 A projective semilocal, $\delta$-supplemented module with small radical is supplemented.
\end{proposition}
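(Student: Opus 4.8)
The plan is to show that under the stated hypotheses every submodule $N\leq M$ has an honest (small) supplement, by first producing a $\delta$-supplement and then upgrading it. Let $M$ be projective, semilocal, $\delta$-supplemented, with $\Rad(M)\ll M$. Given $N\leq M$, since $M$ is projective and $\delta$-supplemented it is $\delta$-lifting, so there is a decomposition $M=D_1\oplus D_2$ with $D_1\subseteq N$ and $N\cap D_2\ll_\delta D_2$. As usual $D_2$ is then a $\delta$-supplement of $N$ in $M$ (because $N+D_2\supseteq D_1+D_2=M$ and $N\cap D_2\ll_\delta D_2$), and $D_2$ is itself projective as a direct summand of $M$. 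So it suffices to prove that in a projective module which is semilocal with small radical, every $\delta$-small submodule is actually small; applying this to $K=N\cap D_2\ll_\delta D_2$ — noting $D_2$ inherits all three hypotheses, being a direct summand of $M$ — gives $K\ll D_2$, hence $D_2$ is a genuine supplement of $N$, and $M$ is supplemented.

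The core step is therefore: \emph{if $P$ is projective, semilocal, and $\Rad(P)\ll P$, then $X\ll_\delta P$ implies $X\ll P$.} To see this, suppose $X+Z=P$ for a proper submodule $Z<P$; I must derive a contradiction. Enlarge $Z$ to a maximal submodule $Z\leq Z'<P$ (possible since $P$ is finitely generated — or at least $P/Z$ is, as $P/\Rad(P)$ is semisimple and $\Rad(P)\ll P$ forces $\Rad(P)\leq Z$... actually more carefully: since $\Rad(P)\ll P$ and $X+Z=P$, we get $P/Z$ is a nonzero quotient with zero radical image, so $P/Z$ has a maximal submodule). Then $X+Z'=P$. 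If $P/Z'$ is singular, then $\delta$-smallness of $X$ directly contradicts $X+Z'=P$ with $Z'$ proper. If $P/Z'$ is nonsingular simple, one uses Lemma \ref{Lemma:Zhou-equivalent conditions for delta-small modules}: from $X+Z'=P$ we get $P=Z'\oplus Y$ with $Y$ a projective semisimple submodule, $Y\leq X$; but then $Y\cong P/Z'$ is a nonzero semisimple summand of $P$, so $Y\not\leq\Rad(P)$, whereas $Y\leq X\ll_\delta P$ forces $Y\leq\delta(P)$ — and here the small-radical hypothesis must be exploited to pin down $\delta(P)$ relative to $\Rad(P)$. This matching of $\delta(P)$ and $\Rad(P)$ under $\Rad(P)\ll P$ is where semilocality enters essentially: $\delta(P)/\Rad(P)\leq\Soc(P/\Rad(P))$, and the decomposition $P=Z'\oplus Y$ produced by the lemma splits off exactly the nonsingular part, which the radical cannot see.

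The main obstacle I anticipate is the nonsingular case, i.e. ruling out that a $\delta$-small submodule contains a nonsingular simple summand of $P$. The clean way is: $X\ll_\delta P$ gives $X\leq\delta(P)$, and I want $\delta(P)\ll P$, which together with $X+Z=P$ and $Z$ proper is the contradiction. Now $\delta(P)$ contains $\Rad(P)$ and $\delta(P)/\Rad(P)$ is semisimple (contained in $\Soc(P/\Rad(P))$, which is all of $P/\Rad(P)$ by semilocality); but any $\delta$-small submodule, by Lemma \ref{Lemma:Zhou-equivalent conditions for delta-small modules}, only ever contributes projective semisimple \emph{direct summands} on top of a complement, so intuitively $\delta(P)=\Rad(P)+(\text{a semisimple summand})$; since $\Rad(P)\ll P$, showing that this extra semisimple piece is zero (or itself small) reduces to showing $P$ has no projective semisimple direct summand lying inside $\delta(P)$ — equivalently, every simple summand of $P$ is non-$\delta$-small, which is automatic because a direct summand is never small in $P$ unless it is zero. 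Thus $X$, being $\delta$-small, can meet no nonzero summand in a summand, and the lemma's decomposition $P=Z'\oplus Y$ with $0\neq Y\leq X$ is already absurd: $Y$ is simultaneously a direct summand and $\delta$-small, forcing $Y=0$. Packaging this observation correctly — that $\delta$-small submodules of a projective module with small radical are small — is the technical heart, and once it is in place the proposition follows immediately from the $\delta$-lifting property as above.
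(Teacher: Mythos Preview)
Your core step --- that in a projective semilocal module $P$ with small radical every $\delta$-small submodule is small --- is false, and the final justification you offer conflates smallness with $\delta$-smallness. Take $P$ to be any nonsingular simple projective module (for instance $P={}_RR$ with $R$ a field): $P$ is projective, semilocal, has zero radical, and $P\ll_\delta P$ vacuously because $P$ has no proper submodule with singular quotient; yet $P$ is certainly not small in itself. More generally, any nonsingular simple direct summand $Y$ of $P$ is at once a direct summand and $\delta$-small in $P$, so your sentence ``$Y$ is simultaneously a direct summand and $\delta$-small, forcing $Y=0$'' is simply not true. In your setup this means the summand $D_2$ from the $\delta$-lifting decomposition may well contain nonsingular simple summands, and then $N\cap D_2$ can be $\delta$-small in $D_2$ without being small there; you have not produced a supplement.

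The paper's proof gets around precisely this obstruction by a preliminary decomposition: it writes $\Soc(M)=D\oplus(\Soc(M)\cap\Rad(M))$ and uses semilocality to split $M=D\oplus N$ with $D$ semisimple and $\Soc(N)\subseteq\Rad(N)$. This last inclusion forces every maximal submodule of $N$ to be essential (a nonsingular simple quotient would split off a simple summand of $N$, impossible since $\Soc(N)\subseteq\Rad(N)$), whence $\delta(N)=\Rad(N)$. Only after this reduction does the $\delta$-lifting argument you sketch go through, now inside $N$: for $L\leq N$ one gets $N=A\oplus B$ with $L\cap B\ll_\delta B$, hence $L\cap B\subseteq\delta(B)\subseteq\delta(N)=\Rad(N)\ll N$, so $L\cap B\ll B$. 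Your proposal is missing exactly this splitting-off of the nonsingular semisimple part; without it the passage from $\delta$-small to small cannot be made.
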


\begin{proof}
Let $S=\Soc(M)=D\oplus (S\cap \Rad(M))$. Since $M$ is semilocal, there exists $N\subseteq M$ such that $D+N=M$ and $D\cap N \subseteq \Rad(M)$. But since $D\cap \Rad(M)=0$, $M=D\oplus N$ with $D$ semisimple and $\Rad(M)=\Rad(N)$. Note that $$\Soc(N)=S\cap N = (D\oplus (S\cap \Rad(M))\cap N = ((D\cap N) \oplus (S\cap \Rad(M)) = S\cap \Rad(N) \subseteq \Rad(N).$$
Hence if $K\subseteq N$ is a maximal submodule, then $N/K$ must be singular, since otherwise $N/K$ would be isomorphic to a simple direct summand of $N$ which is impossible as $\Soc(N)\subseteq \Rad(N)$. Thus $\Rad(N)=\delta(N)$. By \cite{Kosan}*{3.2} $N$ is $\delta$-lifting since it is $\delta$-supplemented and projective. Hence for any submodule $L\subseteq N$ there exist $A,B \subseteq N$ such that $N=A\oplus B$ and $A\subseteq L$ and $L\cap B \ll_\delta B$. In particular $L\cap B \subseteq \delta(B)\subseteq \delta(N)=\Rad(N)$. As $M$ has a small radical, so has $N$ and hence $N\cap B \ll N$. But since $B$ is a direct summand of $N$, $N\cap B \ll B$. This shows that $B$ is a supplement of $L$ in $N$, i.e. $N$ is a supplemented module.
We showed that $M=D\oplus N$ is the direct sum of two supplemented modules. As $M$ is projective, $M$ is itself supplemented.
\end{proof}

\begin{corollary} Let $R$ be a ring with $J=\Jac R$ and $S=\Soc(_{R}R)$. Then the following statements are equivalent.
\begin{enumerate}
\item[(a)] $R$ is semiperfect.
\item[(b)] $R$ is $\delta$-semiperfect and semilocal.
\item[(c)] $R$ is $\delta$-semiperfect and $S/S\cap J$ is finitely generated.
\end{enumerate}
\end{corollary}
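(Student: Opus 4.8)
The plan is to deduce the corollary directly from the module-theoretic statements already in hand, applied to the cyclic projective left module $M={}_RR$. For this module we have $\Rad({}_RR)=J$ and $\Soc({}_RR)=S$, and $\Rad({}_RR)\ll {}_RR$ because $R$ is finitely generated. I would use the two translations recalled before Proposition~\ref{Proposition_delta_lifting}: $R$ is semiperfect if and only if ${}_RR$ is supplemented, and $R$ is $\delta$-semiperfect if and only if ${}_RR$ is $\delta$-supplemented; likewise ``$R$ is semilocal'' means precisely that ${}_RR$ is a semilocal module.

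First I would settle $(a)\Rightarrow(b)$: if $R$ is semiperfect, then ${}_RR$ is supplemented; since any supplement is a $\delta$-supplement (a small submodule is $\delta$-small), ${}_RR$ is $\delta$-supplemented and so $R$ is $\delta$-semiperfect, while the implication ``semiperfect $\Rightarrow$ semilocal'' is the standard fact that $R/J$ is semisimple artinian. Next, $(b)\Leftrightarrow(c)$ is immediate once one notices that both assertions share the hypothesis that $R$ is $\delta$-semiperfect, i.e. that ${}_RR$ is finitely generated and $\delta$-supplemented; for such a module Lemma~\ref{Lemma_Semilocal} says that ${}_RR$ is semilocal exactly when $\Soc({}_RR)/(\Soc({}_RR)\cap\Rad({}_RR))=S/(S\cap J)$ is finitely generated. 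Finally, $(b)\Rightarrow(a)$: if $R$ is $\delta$-semiperfect and semilocal, then ${}_RR$ is a projective, semilocal, $\delta$-supplemented module with small radical, so Proposition~\ref{Proposition_delta_lifting} yields that ${}_RR$ is supplemented, hence $R$ is semiperfect. Together these give the three equivalences.

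The only points requiring a word of care are that $\Rad({}_RR)=J$ is small in ${}_RR$ (clear from finite generation of $R$), that a supplement submodule is in particular a $\delta$-supplement (which is what makes ``semiperfect $\Rightarrow$ $\delta$-semiperfect'' work), and the classical fact that semiperfect rings are semilocal. None of these is a real obstacle; the genuine content of the corollary is already contained in Lemma~\ref{Lemma_Semilocal} and Proposition~\ref{Proposition_delta_lifting}, and the proof is essentially a repackaging of those two results in the language of rings.
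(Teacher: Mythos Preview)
Your proof is correct and follows essentially the same route as the paper: $(a)\Rightarrow(b)$ is immediate, $(b)\Leftrightarrow(c)$ is Lemma~\ref{Lemma_Semilocal} applied to $M={}_RR$, and $(b)\Rightarrow(a)$ is Proposition~\ref{Proposition_delta_lifting} applied to $M={}_RR$. You have simply spelled out the verifications (small radical, supplement $\Rightarrow$ $\delta$-supplement, semiperfect $\Rightarrow$ semilocal) that the paper leaves implicit.
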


\begin{proof}
$(a)\Rightarrow (b)$ is clear, $(b)\Leftrightarrow (c)$ follows from Lemma \ref{Lemma_Semilocal} and $(b)\Rightarrow (a)$ follows from Proposition \ref{Proposition_delta_lifting}.
\end{proof}

\begin{remark} In particular any ring $R$ with finitely generated left socle, e.g. $R$ left noetherian, is semiperfect if and only if it is $\delta$-semiperfect. There are $\delta$-semiperfect rings which are not semilocal and hence not semiperfect (see \cite{Zhou}*{Example 4.1}).
\end{remark}

We finish this section by showing that the last remark also holds for modules, i.e.finitely generated modules with finitely generated socle are supplemented if and only if they are $\delta$-supplemented.

\begin{lemma}\label{Lemma: SocM finitely generated then maximal delta supplement is a supplement}Let $M$ be a module and $K\leq M$ be a maximal
submodule of $M$. Suppose $\Soc (M)$ is finitely generated and $K$
has a $\delta$-supplement $H$ in $M$. Then $K$ has a supplement in
$M$ contained in $H$.
\end{lemma}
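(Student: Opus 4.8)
The plan is to reduce the $\delta$-supplement $H$ of the maximal submodule $K$ to a genuine supplement by splitting off the semisimple obstruction that prevents $H\cap K$ from being small rather than merely $\delta$-small. First I would invoke Lemma~\ref{Lemma:Zhou-equivalent conditions for delta-small modules}: since $H\cap K\ll_\delta H$, I want to control the projective semisimple summands of $H$ that $H\cap K$ can "see". Concretely, I would consider $\Soc(H)$, which is a submodule of $\Soc(M)$ and hence finitely generated by hypothesis, so $\Soc(H)$ is a finite direct sum of simple modules. Write $\Soc(H) = V\oplus W$ where $W$ collects the simple summands of $\Soc(H)$ lying in $H\cap K$ and $V$ is a complement; since $\Soc(H)$ is finitely generated and semisimple, such a decomposition exists and $W$ is a finitely generated semisimple submodule contained in $H\cap K$.

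The key step is then to peel $W$ off: because $W$ is semisimple and, crucially, because the simple summands involved are exactly the ones witnessing $\delta$-smallness (nonsingular simples, by Lemma~\ref{Lemma:Zhou-equivalent conditions for delta-small modules} applied to $H\cap K + (\text{complement}) = H$), one shows $W$ splits in $H$, say $H = W\oplus H'$ for some $H'\leq H$. Here I would use that a finitely generated semisimple direct summand of $\Soc(H)$ consisting of projective simples can be complemented in $H$ — this is precisely the mechanism of Lemma~\ref{Lemma:Zhou-equivalent conditions for delta-small modules}, iterated finitely many times over the simple summands of $W$. Once $H = W\oplus H'$, I claim $H'$ is the desired supplement: since $M = K+H = K + H'$ (as $W\subseteq K$), and since $H'\cap K \subseteq H\cap K$ is still $\delta$-small in $H'$, but now by construction $H'$ has no projective semisimple summand meeting $H'\cap K$ nontrivially, so applying Lemma~\ref{Lemma:Zhou-equivalent conditions for delta-small modules} once more forces $H'\cap K\ll H'$ in the ordinary sense. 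Thus $H'$ is a supplement of $K$ contained in $H$.

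The main obstacle I anticipate is the splitting step $H = W\oplus H'$: one must be careful that the simple summands collected into $W$ really are projective (so that Lemma~\ref{Lemma:Zhou-equivalent conditions for delta-small modules} applies) and that finitely many such splittings can be composed without losing the inclusion into $H$ or the property $M=K+H'$. The finite generation of $\Soc(M)$ is exactly what makes this termination work — with infinitely many simple summands one could not guarantee a single complement. A secondary subtlety is verifying that after splitting, $H'\cap K$ contains no further projective semisimple piece that could obstruct ordinary smallness; this follows because any such piece would be a simple summand of $\Soc(H')\subseteq \Soc(H)$ lying in $H'\cap K\subseteq H\cap K$, contradicting the maximality in the choice of $W$. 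I would present the argument by first fixing the decomposition of $\Soc(H)$, then doing the splitting, then checking $M=K+H'$ and $H'\cap K\ll H'$ in that order.
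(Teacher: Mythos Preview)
Your overall strategy---split off from $H$ a semisimple piece lying in $K$ so that what remains is a genuine supplement---is exactly the paper's idea, and your final verification is sound \emph{once the splitting is in hand}. The gap is in how you construct and justify $H=W\oplus H'$.

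You take $W$ to be the sum of all simple submodules of $\Soc(H)$ lying in $H\cap K$, i.e.\ $W=\Soc(H)\cap K$. Two things go wrong. First, $W$ can contain singular simples: a singular simple $S\subseteq H\cap K$ that is \emph{not} a direct summand of $H$ satisfies $S\ll H$, hence $S\ll_\delta H$, so its presence is perfectly compatible with $H\cap K\ll_\delta H$; but such an $S$ need not split off from $H$ (take $H$ local with essential singular socle inside $H\cap K$: here $H$ itself is already a supplement of $K$, yet your procedure would try, and fail, to split off $\Soc(H)$). Second, Lemma~\ref{Lemma:Zhou-equivalent conditions for delta-small modules} does not let you split off a \emph{chosen} projective semisimple submodule. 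It only says that if $N\ll_\delta H$ and $N+X=H$ then $H=X\oplus Y$ for \emph{some} projective semisimple $Y\subseteq N$; you cannot ``iterate over the simple summands of $W$'' because you do not get to pick which simple is peeled off, nor have you produced the auxiliary $X$.

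The paper runs the argument as a descent instead. If $K\cap H\ll H$ we are done; otherwise $(K\cap H)+L_1=H$ for some proper $L_1$, and now Lemma~\ref{Lemma:Zhou-equivalent conditions for delta-small modules} applies directly (with $N=K\cap H\ll_\delta H$) to give $H=L_1\oplus Y_1$ with nonzero projective semisimple $Y_1\subseteq K\cap H$. One checks that $L_1$ is again a $\delta$-supplement of $K$ with $\Soc(L_1)\subsetneq\Soc(H)$, and repeats. Finite generation of $\Soc(M)$ forces termination. Your one-shot argument can be repaired by redefining $W$ to be a \emph{maximal} semisimple direct summand of $H$ contained in $K$ (such exists since $\Soc(H)$ has finite length): then $H=W\oplus H'$ by fiat, $H'\cap K\ll_\delta H'$ since $H'$ is a summand, and if $H'\cap K$ were not small in $H'$ a single further application of Lemma~\ref{Lemma:Zhou-equivalent conditions for delta-small modules} would yield a nonzero semisimple summand of $H'$ inside $K$, contradicting the maximality of $W$.
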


\begin{proof} By hypothesis, $H$ is a $\delta$-supplement of $K$ in $M$, that is, $K+H=M$ and $K\cap H \ll _{\delta}
H$, in particular, $K\cap H \leq \delta (H)$. Since $$M/K =(H+K)/K
\cong H/(K\cap H)$$ is simple, $K\cap H$ is a maximal submodule of
$H$. Therefore, we have either $\delta (H)=H$ or $\delta (H)= K\cap
H$. First, suppose that $\delta (H)=H$. Since $\delta (H) \ll
_{\delta} M$ (see, \cite{Zhou}*{Lemma 1.3(2)}) and $K+H=M$, we have
$K\oplus Y=M$ for a semisimple submodule $Y \leq \delta (H)$ by
\cite{Zhou}*{Lemma 1.2}. In this case, clearly $Y$ is a supplement of
$K$ in $M$.

Now, let $\delta (H)=K\cap H$. If $K\cap H \ll H$, then $H$ is a supplement of $K$ in $M$. Suppose $K\cap H=\delta (H)$
is not small in $H$, that is, $\delta (H) + L_{1}=H$ for some
proper submodule $L_{1} \lneqq H$. Then by \cite{Zhou}*{Lemma 1.2},
$H=L_{1} \oplus Y_{1}$ for some semisimple submodule $Y_{1} \leq
\delta (H)$. Since $L_{1}$ is a direct summand of $H$, we have
$$\delta (L_{1})=L_{1} \cap \delta (H)=L_{1} \cap H \cap K=L_{1}
\cap K$$ and $\delta (L_{1}) \ll _{\delta} L_{1}$. We also have
$$K+H=K +L_{1} + Y_{1}=K+L_{1}.$$ Therefore $L_{1}$ is a
$\delta$-supplement of $K$.

Since $L_{1}$ is a proper submodule of $H$ and $Y_{1}$ is a
(nonzero) semisimple module contained in $H$, we have $ \Soc (L_{1})
\lneqq \Soc (H)$. Now, if $\delta (L_{1}) \ll L_{1}$, then $L_{1}$
is a supplement of $K$ in $M$ by Lemma \ref{Lemma:local module is
supplement}, and we are done. Suppose $\delta(L_{1})$ is not small
in $L_{1}$, then $L_{1}=\delta (L_{1}) + L_{2}$ for some $L_{2}
\lneqq L_{1}$. Arguing as above we get $L_{2}$ is a
$\delta$-supplement of $K$ in $M$ with $\Soc (L_{1}) \gneqq \Soc
(L_{2})$. Continuing in this way, if non of the $L_{i}$'s is a
supplement of $K$ we shall get, a strictly descending chain of
submodules $\Soc (L_{1}) \geq \Soc (L_{2}) \geq \cdots$ of
$\Soc(M)$. This will contradict the fact that $\Soc (M)$ is finitely
generated (see, \cite{Anderson}*{Corollary 10.16}). Therefore $K$ has
a supplement in $M$.
\end{proof}

\begin{corollary}\label{Corollary:M is Delta-supplemented if and only if it is supplemented}Let $M$ be a finitely generated module. Suppose $\Soc(M)$ is finitely generated, then $M$ is supplemented if and only if
$M$ is $\delta$-supplemented.
\end{corollary}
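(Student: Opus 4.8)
The plan is to prove the two implications separately. The forward implication is immediate and needs neither finite generation nor the hypothesis on the socle: a small submodule is always $\delta$-small, since if $X\ll Y$ then $X+Z\neq Y$ for \emph{every} proper $Z\lneq Y$, in particular for those $Z$ with $Y/Z$ singular. Consequently every supplement of a submodule $K\leq M$ is at the same time a $\delta$-supplement of $K$, so a supplemented module is $\delta$-supplemented.

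For the substantial implication I would start from a finitely generated $\delta$-supplemented module $M$ with $\Soc(M)$ finitely generated. First, Proposition~\ref{Proposition: Strucrure of finitely generated delta supplemented modules} (implication $(1)\Rightarrow(2)$) tells us that every maximal submodule of $M$ has a $\delta$-supplement. Then Lemma~\ref{Lemma: SocM finitely generated then maximal delta supplement is a supplement} applies verbatim and yields, for each maximal submodule $K\lneq M$, an honest supplement $H_K$ of $K$ in $M$ (sitting inside the given $\delta$-supplement). So the real content is to upgrade the statement ``every maximal submodule of $M$ has a supplement'' to ``$M$ is supplemented''.

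To do that I would run the classical analogue of the $\Lambda(M)$-argument used in Proposition~\ref{Proposition: Strucrure of finitely generated delta supplemented modules}. Each $H_K$ is local: $H_K\cap K$ is a maximal submodule of $H_K$ (because $H_K/(H_K\cap K)\cong M/K$ is simple) and it is small in $H_K$ by Lemma~\ref{Lemma:local module is supplement}'s hypothesis being met in reverse; since any small submodule lies in the radical, $H_K\cap K=\Rad(H_K)$ is a maximal small submodule, hence $H_K$ is local by \cite{Wisbauer}*{41.4}. Now put $\Lambda=\sum_K H_K$, the sum over all maximal submodules $K$ of $M$. If $\Lambda\neq M$, then, $M$ being finitely generated, $\Lambda$ lies in some maximal submodule $K_0$; but $H_{K_0}\subseteq\Lambda\subseteq K_0$ forces $M=K_0+H_{K_0}=K_0$, a contradiction. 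Hence $M=\Lambda$ is a sum of local submodules, so a finite sum of local submodules since $M$ is finitely generated, and therefore supplemented by \cite{Wisbauer}*{41.6}.

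The main point needing care is precisely this last step — re-deriving the ``supplemented'' version of Proposition~\ref{Proposition: Strucrure of finitely generated delta supplemented modules} — but it is routine given Wisbauer's characterisation of finitely generated supplemented modules as finite sums of local modules; no deeper obstacle arises once Lemma~\ref{Lemma: SocM finitely generated then maximal delta supplement is a supplement} is available. It is worth recording explicitly that the hypothesis that $\Soc(M)$ is finitely generated enters only through that lemma in the backward direction, and not at all in the forward direction.
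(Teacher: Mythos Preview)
Your proof is correct and follows the same route the paper intends: use Proposition~\ref{Proposition: Strucrure of finitely generated delta supplemented modules} and Lemma~\ref{Lemma: SocM finitely generated then maximal delta supplement is a supplement} to see that every maximal submodule has a supplement, observe that such a supplement is local, and conclude via the sum-of-local-modules characterisation \cite{Wisbauer}*{41.6}. One phrasing nit: the smallness of $H_K\cap K$ in $H_K$ is simply the definition of ``supplement'' and has nothing to do with Lemma~\ref{Lemma:local module is supplement}; that reference should be dropped.
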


\begin{proof}Necessity is clear. Sufficiency is a direct consequence
of Proposition \ref{Proposition: Strucrure of finitely generated
delta supplemented modules} and Lemma \ref{Lemma: SocM finitely
generated then maximal delta supplement is a supplement}.
\end{proof}

\begin{corollary}Let $M$ be a module with finitely generated socle.
Then $M$ is cofinitely supplemented if and only if $M$ is
cofinitely $\delta$-supplemented.
\end{corollary}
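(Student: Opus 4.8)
The plan is to isolate a version of Lemma \ref{Lemma: SocM finitely generated then maximal delta supplement is a supplement} with the maximality hypothesis removed, namely: if $\Soc(M)$ is finitely generated and a submodule $N\leq M$ has a $\delta$-supplement $H$ in $M$, then $N$ has a supplement in $M$ contained in $H$. Granting this, the corollary follows at once. Necessity needs no hypothesis on the socle, since every small submodule is $\delta$-small, so every supplement is a $\delta$-supplement and cofinitely supplemented implies cofinitely $\delta$-supplemented. For the converse, let $M$ be cofinitely $\delta$-supplemented with $\Soc(M)$ finitely generated and let $N\leq M$ be cofinite; choosing a $\delta$-supplement $H$ of $N$, the assertion provides a supplement of $N$ in $M$, so $M$ is cofinitely supplemented.

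To prove the assertion I would induct on the length of $\Soc(H)$, which is finite because $\Soc(H)\leq\Soc(M)$ is finitely generated semisimple (\cite{Anderson}*{Corollary 10.16}). Write $M=N+H$ with $N\cap H\ll_\delta H$. If $N\cap H\ll H$, then $H$ is itself a supplement of $N$ in $M$ and we are done; this also settles the base case $\Soc(H)=0$, since $\Soc(H)=0$ forces $N\cap H\ll H$ (a failure would, by Lemma \ref{Lemma:Zhou-equivalent conditions for delta-small modules}, split off a nonzero semisimple direct summand of $H$, impossible when $\Soc(H)=0$). Otherwise $(N\cap H)+L_1=H$ for some $L_1\lneqq H$, and Lemma \ref{Lemma:Zhou-equivalent conditions for delta-small modules} yields $H=L_1\oplus Y_1$ with $0\neq Y_1\leq N\cap H$ semisimple. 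Then $N+L_1\supseteq Y_1+L_1=H$, hence $N+L_1=N+H=M$; moreover $N\cap L_1\leq N\cap H\ll_\delta H$, so $N\cap L_1\ll_\delta H$, because a submodule of a $\delta$-small submodule is again $\delta$-small. Since $L_1$ is a direct summand of $H$, it is a $\delta$-supplement (of $Y_1$) in $H$ and hence $\delta$-coclosed in $H$ by Lemma \ref{Lemma:Delta-supplement is delta-coclosed}, so Proposition \ref{Proposition:Some properties of delta-coclosed submodules}(1) gives $N\cap L_1\ll_\delta L_1$. Thus $L_1$ is again a $\delta$-supplement of $N$ in $M$, while $\Soc(L_1)$ is properly contained in $\Soc(H)=\Soc(L_1)\oplus Y_1$; the induction hypothesis applied to $L_1$ therefore furnishes a supplement of $N$ in $M$ contained in $L_1\subseteq H$.

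The crux, and the sole place where finite generation of $\Soc(M)$ is used, is that the induction bottoms out: each pass through the ``otherwise'' branch strictly decreases the length of the socle of the ambient $\delta$-supplement, so after finitely many reductions we land in the ``$N\cap H\ll H$'' branch, which yields a genuine supplement. Everything else is bookkeeping with Lemmas \ref{Lemma:Zhou-equivalent conditions for delta-small modules} and \ref{Lemma:Delta-supplement is delta-coclosed} and Proposition \ref{Proposition:Some properties of delta-coclosed submodules}. As a sanity check, specialising the assertion to a maximal submodule $N$---which is automatically cofinite---recovers Lemma \ref{Lemma: SocM finitely generated then maximal delta supplement is a supplement}; alternatively one could pair that lemma with the standard fact that a module all of whose maximal submodules have supplements is cofinitely supplemented, but the route above is self-contained.
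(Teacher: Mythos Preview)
Your argument is correct. The induction on the length of $\Soc(H)$ is clean, and the passage from $N\cap L_1\ll_\delta H$ to $N\cap L_1\ll_\delta L_1$ via the $\delta$-coclosed machinery (Lemma \ref{Lemma:Delta-supplement is delta-coclosed} and Proposition \ref{Proposition:Some properties of delta-coclosed submodules}(1)) is exactly what is needed to keep the induction going.

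Your route differs from the paper's in a meaningful way. The paper does \emph{not} strengthen Lemma \ref{Lemma: SocM finitely generated then maximal delta supplement is a supplement}; it keeps the maximality hypothesis and argues only that every maximal submodule of $M$ has a supplement (splitting into the easy case $\Soc(M)\nleq K$, where a simple direct summand does the job, and the case $\Soc(M)\leq K$, where Lemma \ref{Lemma: SocM finitely generated then maximal delta supplement is a supplement} applies directly). It then invokes the external result \cite{ABS}*{Theorem 2.8} --- precisely the ``standard fact'' you mention in your final sentence --- to pass from ``every maximal submodule has a supplement'' to ``cofinitely supplemented''. So the paper follows exactly the alternative you sketch at the end. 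What you gain is self-containment and a genuinely stronger intermediate lemma (every $\delta$-supplement contains a supplement, no maximality needed); what the paper gains is economy, since the ABS result is already available and Lemma \ref{Lemma: SocM finitely generated then maximal delta supplement is a supplement} is already proved.
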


\begin{proof}Necessity is clear. Conversely suppose $M$ is cofinitely
$\delta$-supplemented. Let $K$ be a maximal submodule of $M$. If
$\Soc(M)$ is not contained in $K$, then we have $K+\Soc (M)=M$ by
maximality of $K$ in $M$. Then $K+S=M$ for some simple submodule of
$M$. Since $S$ is simple and $S \nleq K$, we have $K \oplus S=M$,
and hence $S$ is a supplement of $K$ in $M$. \\Now, if $\Soc (M)
\leq K$ and $H$ is a $\delta$-supplement of $K$ in $M$, then $K$ has
a supplement in $M$ by Lemma \ref{Lemma: SocM finitely generated
then maximal delta supplement is a supplement}. Hence $M$ is
cofinitely supplemented by \cite{ABS}*{Theorem 2.8}.
\end{proof}

\begin{bibdiv}
 \begin{biblist}
\bib{KAT}{article}{author={K. Al-Takhman}, title={Cofinitely $\delta$-supplemented and Cofinitely $\delta$-semiperfect modules}, journal={International Journal of Algebra}, volume={1:12}, year={2007},pages={601--613}}

\bib{ABS}{article}{author={R.\ Alizade}, author={G.\ Bilhan}, author={P.~F.\ Smith}, title={Modules whose maximal submodules have supplements}, journal={Comm.\ Algebra}, volume={29}, year={2001},pages={2389--2405}}

\bib{Anderson}{book}{title={Rings and categories of modules}, author={Anderson, F.W.}, author={Fuller, K.R.}, publisher={Springer-New York},year={1992}}

\bib{Bass}{article}{author={Bass, H.},title={Finitistic dimension and a homological generalization of semiprimary rings.}, journal={Trans.\ Amer.\ Math.\ Soc.},volume={95}, pages={466--488}, year={1960}}

\bib{lifting}{book}{title={Lifting Modules. Supplements and Projectivity in Module Theory}, author={Clark, J.},author={Lomp, C.},author={Vanaja, N.},author={Wisbauer, R.},date={2006},series={Frontiers in Mathematics},pages={406},publisher={Birkh\"auser},address={Basel}}

\bib{KaschMares}{article}{author={Kasch, F.}, author={Mares, E.\ A.}, title={Eine Kennzeichnung semi-perfekter Moduln}, journal={Nagoya Math.\ J.}, volume={27}, pages={525--529},year={1966}}

\bib{Keskin}{article}{author={D. Keskin},title={On lifting modules},journal={Comm.\ Algebra}, volume={28(7)}, year={2000}, pages={3427--3440}}

\bib{Lomp}{article}{author={C. Lomp},title={On semilocal rings and modules},journal={Comm.\ Algebra}, volume={27(4)}, year={1999}, pages={1921--1935}}

\bib{Kosan}{article}{author={M.T. Ko\c{s}an},title={$\delta$-lifting and $\delta$-supplemented modules}, journal={Algebra Colloquium}, volume={14:1},pages={53--60},
year={2007}}

\bib{Wisbauer}{book}{author={R. Wisbauer},title={Foundations of Modules and Rings}, publisher={Gordon and Breach},year={1991}}

\bib{Zhou}{article}{author={Y. Zhou},title={Generalizations of perfect, semiperfect and semiregular rings}, journal={Algebra Colloquium}, volume={7:3},pages={305--318},
year={2000}}

\end{biblist}
\end{bibdiv}

\end{document}